\documentclass[11pt, letterpaper,reqno]{amsart}

\usepackage{amsmath,amsthm,amsrefs,amssymb,hyperref,caption,color}
\usepackage[percent]{overpic}
%\usepackage{todonotes}
%\usepackage{pgfplots}
%\usetikzlibrary{patterns}

\numberwithin{equation}{section}
\newtheorem{question}{Question}[section]

\newtheorem{corollary}{Corollary}[section]
\newtheorem{lemma}{Lemma}[section]
\newtheorem{theorem}{Theorem}[section]

\theoremstyle{definition}

\DeclareMathOperator{\D}{\mathbb{D}}
\DeclareMathOperator{\C}{\mathbb{C}}

\begin{document}
	\title[Extremal distance and mappings in Hardy and Bergman spaces]{Extremal distance and conformal mappings in Hardy and Bergman spaces}
	
	\author{Christina Karafyllia}  
	 \address{Department of Mathematics, University of Thessaly, Lamia, 35100, Greece}
	 \email{ckarafyllia@uth.gr}
	 	\address{Institute for Mathematical Sciences, Stony Brook University, Stony Brook, NY 11794, U.S.A.}
	\email{christina.karafyllia@stonybrook.edu}

	\subjclass[2010]{Primary 30H10, 30H20, 31A15; Secondary 42B30, 30C85}
	
	%\date{}
	\keywords{Extremal distance, reduced extremal distance, harmonic measure, Hardy spaces, weighted Bergman spaces, Hardy number}
	
	\begin{abstract} We prove necessary and sufficient integral conditions involving extremal distance for a conformal mapping of the unit disk to belong to the Hardy or weighted Bergman spaces. We also give characterizations for the Hardy number and the Bergman number of a simply connected domain in terms of extremal distance.
	\end{abstract}
	
	\maketitle
	
	\section{Introduction}\label{int}

In this paper we study the problem of finding necessary and sufficient geometric conditions for a conformal mapping of the unit disk $\D$ to belong to some Hardy or weighted Bergman space. Such conditions have been proved in terms of harmonic measure and hyperbolic distance. We establish new conditions involving extremal distance.
  
The Hardy space with exponent $p>0$ is denoted by $H^p (\D)$ and is defined to be  the set of all holomorphic functions $f$ of $\D$ such that
\[ \sup_{0<r<1}\int_{0}^{2\pi} {{{| {f( re^{i\theta} )}|}^p} d\theta}  <  +\infty.\]
For the theory of Hardy spaces, see \cite{Dur}. Let $D\ne \mathbb{C}$ be a simply connected domain and $f$ be a Riemann mapping from $\mathbb{D}$ onto $D$. The Hardy number of $D$, or equivalently of $f$, is defined by 
\[{h}\left( D \right)=h(f) = \sup \left\{ {p > 0:f \in {H^p}( \mathbb{D})} \right\}.\]
This definition is independent of the choice of the Riemann mapping onto $D$ \cites{Han,Kim}. Since every conformal mapping of $\mathbb{D}$ belongs to ${H^p}\left( \mathbb{D} \right)$ for all $p \in (0,1/2)$ \cite[p.\ 50]{Dur}, the number  ${h}(D)$ lies in $[1/2, +\infty]$.

A more general class of holomorphic functions that contains Hardy spaces is the class of weighted Bergman spaces. The weighted Bergman space with exponent $p>0$ and weight $\alpha>-1$ is denoted by $A_\alpha ^p (\D)$ and is defined to be the set of all holomorphic functions $f$ of $\mathbb{D}$ such that
\[ \int_\mathbb{D} {{{\left| {f\left( z \right)} \right|}^p}{{( {1 - {{| z |}^2}})}^\alpha }dA\left( z \right)}  < +\infty,
\] 
where $dA$ denotes the Lebesgue area measure on $\mathbb{D}$. For the theory of Bergman spaces, see \cite{DurS}. As an  analogue of the Hardy number for weighted Bergman spaces, the current author and Karamanlis introduced in \cite{Karjournal} the Bergman number as follows. If $f$ is a conformal mapping of $\D$, the Bergman number of $f$ is defined by
\[b(f)=\sup\left\{ \frac{p}{\alpha+2}:\ f\in A_{\alpha}^p(\D),p>0,\alpha>-1 \right\}.\]
They then proved that $b(f)=h(f)$. See \cites{Karjournal,Kabergman}.

A well-studied problem in geometric function theory is to find geometric conditions for a conformal mapping $f$ of $\D$ to belong to some space ${H^p}( \mathbb{D})$ or $A_\alpha ^p (\D)$ by studying the image region $f(\D)$. See, for example, \cites{Karjournal,Karams,Cor} and references therein. Some of these conditions that have been proved recently involve conformal invariants such as harmonic measure and hyperbolic distance. Before we state the results, we fix some notation. 

For a domain $\Omega$ in $\C$, a point $z \in \Omega$ and a Borel subset $A$ of $\overline \Omega$, let ${\omega _{\Omega}}\left( {z,A} \right)$ denote the harmonic measure of $A$ at $z$ with respect to the component of $\Omega \backslash A$ containing $z$. Also, we denote by $d_\Omega$ the hyperbolic distance in $\Omega$. For the general theory of harmonic measure and hyperbolic distance, see \cite{Gar}.

Henceforth, let $f$ be a conformal mapping of $\D$. For $r>0$, we set $F_r=\{z\in\D:\ |f(z)|=r\}$. 
In \cites{Cor,Pog} Poggi-Corradini gave a necessary and sufficient integral condition for $f$ to belong to $H^p (\D)$ in terms of the harmonic measure $\omega_{\D} (0,F_r)$. The current author and Karamanlis extended this condition to weighted Bergman spaces in \cite{Karjournal}. In \cite{Karams} the current author established another necessary and sufficient integral condition involving this time the  hyperbolic distance $d_{\D}(0,F_r)$. In \cite{Karcanadian} Betsakos, the author and Karamanlis  generalized this condition to weighted Bergman spaces. We state all these results in the following theorem. For simplicity, we use the convention that $A^{p}_{-1}(\D)=H^p(\D)$ for every $p>0$.

\begin{theorem}[\cites{Cor,Pog,Karjournal,Karams,Karcanadian}]\label{known}
Let $f$ be a conformal mapping of $\D$. If $p>0$ and $\alpha \ge -1$, the following statements are equivalent.
\begin{enumerate}
\item $f\in A_{\alpha}^p (\D)$, \smallskip
\item $\displaystyle \int_{0}^{+\infty} r^{p-1}\omega_{\D} (0,F_r)^{\alpha+2}dr<+\infty$, \smallskip
\item $\displaystyle \int_{0}^{+\infty} r^{p-1}e^{-(\alpha+2)d_{\D}\left(0, F_r\right)}dr<+\infty$.
\end{enumerate}
\end{theorem}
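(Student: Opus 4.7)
The result collects equivalences from several of the cited papers, so my plan is to prove (1)$\iff$(2) via a distribution-function/layer-cake decomposition of the relevant norm, and then (2)$\iff$(3) via a quantitative comparison between harmonic measure and exponentiated hyperbolic distance of the level sets $F_r$. I would treat the Hardy case $\alpha=-1$ separately first, and then build up to $\alpha>-1$.

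For (1)$\iff$(2) in the Hardy case, the starting point is the distribution-function identity
\[
\int_0^{2\pi}|f(\rho e^{i\theta})|^{p}\,d\theta = 2\pi p\int_0^{\infty} t^{p-1}\lambda_\rho(t)\,dt,
\]
where $\lambda_\rho(t)=(2\pi)^{-1}|\{\theta:|f(\rho e^{i\theta})|>t\}|$. Letting $\rho\to 1^-$, subharmonicity of $|f|^p$ gives monotone convergence, and $\lambda_\rho(t)\to \omega_{\D}(0,\{z\in\D:|f(z)|\ge t\})$. Because $f$ is conformal, the boundary of $\{|f|\ge t\}$ inside $\D$ is exactly $F_t$, and conformal invariance of harmonic measure identifies this limit with $\omega_{\D}(0,F_t)$. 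For the Bergman case $\alpha>-1$, I would apply layer-cake to the area integral,
\[
\int_{\D}|f(z)|^{p}(1-|z|^{2})^{\alpha}dA(z)=p\int_0^{\infty} t^{p-1}\int_{\{|f|>t\}}(1-|z|^{2})^{\alpha}dA(z)\,dt,
\]
and then invoke the Karafyllia--Karamanlis comparison
\[
\int_{\{|f|>t\}}(1-|z|^{2})^{\alpha}dA(z)\;\asymp\;\omega_{\D}(0,F_t)^{\alpha+2}
\]
valid for $t$ close to $\sup_{\D}|f|$; this step is where one must exploit the conformality of $f$ in a non-trivial way, typically via Carleson-square estimates localized around the points of $F_t$ where harmonic measure concentrates.

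For (2)$\iff$(3), I would use the fact that for a level set $F_t$ of a conformal mapping one has a two-sided estimate of the form $\omega_{\D}(0,F_t)\asymp e^{-d_{\D}(0,F_t)}$. The upper bound $\omega_{\D}(0,F_t)\le C\,e^{-d_{\D}(0,F_t)}$ is a general fact that follows from comparing harmonic measure with the hyperbolic distance to the corresponding hyperbolic geodesic, while the matching lower bound rests on Beurling's projection theorem applied to a connected subarc of $F_t$ that separates $0$ from $\partial\D$; such an arc exists because $F_t$ is the preimage under $f$ of $\Omega\cap\{|w|=t\}$ where $\Omega=f(\D)$, and for $t$ large enough this preimage contains a crosscut of $\D$. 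Raising the equivalence to the power $\alpha+2$ yields comparability of the integrands in (2) and (3), hence equivalence of the integrals.

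The main obstacle is the Bergman-space comparison $\int_{\{|f|>t\}}(1-|z|^{2})^{\alpha}dA\asymp \omega_{\D}(0,F_t)^{\alpha+2}$: the upper bound is an almost immediate consequence of standard estimates relating $(1-|z|^{2})^{\alpha+2}$ to harmonic measure of Carleson boxes, but the lower bound requires showing that a non-negligible portion of the mass of $\omega_{\D}(0,F_t)$ actually accumulates in a region where $(1-|z|^{2})^{\alpha}dA$ is comparable to $\omega_{\D}(0,F_t)^{\alpha+2}$. For the hyperbolic comparison the subtlety lies in the multiplicity of connected components of $F_t$, which is handled by passing to the component of $\D\setminus F_t$ containing $0$ and using subadditivity of $\omega$.
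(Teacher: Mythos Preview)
First, note that the paper does not supply its own proof of this theorem: it is quoted from the cited sources \cite{Cor,Pog,Karjournal,Karams,Karcanadian} and used as a black box, so there is no in-paper argument to compare your outline against.

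That said, your proposal has a genuine gap in the step (2)$\iff$(3). You write that ``for a level set $F_t$ of a conformal mapping one has a two-sided estimate of the form $\omega_{\D}(0,F_t)\asymp e^{-d_{\D}(0,F_t)}$'' and that ``the upper bound $\omega_{\D}(0,F_t)\le C\,e^{-d_{\D}(0,F_t)}$ is a general fact.'' This is false. The paper itself points this out explicitly just after stating Theorem~\ref{known}: the Beurling--Nevanlinna projection theorem gives only the one-sided inequality
\[
\omega_{\D}(0,F_r)\ge \tfrac{2}{\pi}\,e^{-d_{\D}(0,F_r)},
\]
while the reverse inequality $\omega_{\D}(0,F_r)\le C\,e^{-d_{\D}(0,F_r)}$ fails in general for level sets of conformal maps (this is the content of \cite{Karindiana}). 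Consequently you cannot pass from (2) to (3), or from (3) to (2), by a pointwise comparison of the integrands raised to the power $\alpha+2$.

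What the cited papers actually do is prove (1)$\iff$(2) and (1)$\iff$(3) \emph{independently}; the equivalence (2)$\iff$(3) is then a corollary, not a direct comparison. Your layer-cake outline for (1)$\iff$(2) is along the right lines. For (1)$\iff$(3) one must argue differently: the implication (3)$\Rightarrow$(1) does use the Beurling--Nevanlinna bound (since $e^{-d}\le\tfrac{\pi}{2}\omega$ gives (3)$\Rightarrow$(2)$\Rightarrow$(1)), but the implication (1)$\Rightarrow$(3) in \cite{Karams,Karcanadian} bypasses harmonic measure entirely and works directly with hyperbolic distance, using growth estimates for $|f|$ in terms of $d_{\D}(0,\cdot)$. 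Your sketch would need to be reorganized along these lines to close the gap.
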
 

Harmonic measure and hyperbolic distance can also give characterizations for the Hardy and the Bergman number of a conformal mapping of $\D$. These results are due to Kim and Sugawa \cite{Kim} and the current author \cite{Kararkiv}, respectively.

\begin{theorem}[\cites{Kararkiv,Kim}]\label{haknown} Let $f$ be a conformal mapping of $\D$. Then
\[h(f) =b(f)= \mathop {\liminf}\limits_{r  \to  + \infty } \frac{{\log {\omega _\mathbb{D}}{{\left( {0,{F_r}} \right)}^{ - 1}}}}{{\log r }}=\mathop {\liminf}\limits_{r  \to  + \infty } \frac{{{d_\mathbb{D}}\left( {0,{F_r }} \right)}}{{\log r }}.\]
\end{theorem}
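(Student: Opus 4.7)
The strategy is to derive Theorem \ref{haknown} as a direct consequence of Theorem \ref{known} via a logarithmic change of variable combined with a short Abelian-type argument. Fix $f$ a conformal mapping of $\D$. For $r>|f(0)|$ the component of $\D\setminus F_r$ containing $0$ is $U_r=\{z\in\D:|f(z)|<r\}$, and the family $\{U_r\}$ is increasing in $r$. Hence $r\mapsto \omega_\D(0,F_r)$ is eventually nonincreasing and $r\mapsto d_\D(0,F_r)$ is eventually nondecreasing.

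\textbf{Reduction to an auxiliary lemma.} Set $\phi(u):=\log\omega_\D(0,F_{e^u})^{-1}$ in one case and $\phi(u):=d_\D(0,F_{e^u})$ in the other. Both are nondecreasing for $u$ large, and the substitution $r=e^u$ in the integrals in Theorem \ref{known} gives, for $p>0$ and $\alpha\ge -1$,
\[ f\in A_\alpha^p(\D)\ \Longleftrightarrow\ \int^{+\infty} e^{pu-(\alpha+2)\phi(u)}\,du<+\infty. \]
I would then prove the following auxiliary claim: for any nondecreasing function $\phi:[u_0,+\infty)\to[0,+\infty)$,
\[ \sup\!\left\{s>0:\int^{+\infty}\!e^{su-\phi(u)}\,du<+\infty\right\}=\liminf_{u\to+\infty}\frac{\phi(u)}{u}. \]
Applied with $s=p/(\alpha+2)$, this yields $\sup\{p/(\alpha+2):f\in A_\alpha^p(\D)\}=\liminf_{u\to\infty}\phi(u)/u$. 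Specializing to $\alpha=-1$ gives the formula for $h(f)$, while taking the supremum over $\alpha>-1$ and $p>0$ gives the formula for $b(f)$; both coincide with $\liminf \phi(u)/u$, which translated back to $r$ is exactly the liminf expression in the theorem.

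\textbf{Proof of the auxiliary claim.} Call the left side $L$ and the right side $q_0$. For $L\ge q_0$, if $s<q_0$ pick $\varepsilon>0$ with $s+\varepsilon<q_0$; then $\phi(u)>(s+\varepsilon)u$ for all large $u$, so the integrand is dominated by $e^{-\varepsilon u}$ and the integral converges. For the reverse $L\le q_0$, suppose $s>q_0$ and choose $s'\in(q_0,s)$. By definition of liminf there is a sequence $u_n\to+\infty$ with $\phi(u_n)<s'u_n$. Fix $\beta\in(s'/s,1)$. Using the monotonicity of $\phi$, for $u\in[\beta u_n,u_n]$,
\[ \phi(u)\le\phi(u_n)<s'u_n\le(s'/\beta)u, \]
so $e^{su-\phi(u)}\ge e^{(s-s'/\beta)u}$ on this interval, where $s-s'/\beta>0$. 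Hence
\[ \int_{\beta u_n}^{u_n}e^{su-\phi(u)}\,du\ge(1-\beta)u_n\,e^{(s-s'/\beta)\beta u_n}\longrightarrow+\infty, \]
and the full integral diverges.

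\textbf{Main obstacle.} The computational part is routine; the only genuine point is the lower-bound half of the auxiliary claim, where one must convert pointwise information ``$\phi(u_n)/u_n<s'$ along a sequence'' into integral divergence. This is where monotonicity of $\phi$ is essential: it lets one extend the bad inequality from $u_n$ backwards to an entire interval $[\beta u_n,u_n]$, on which the integrand becomes exponentially large. Without Theorem \ref{known} to begin with, separating the harmonic-measure and hyperbolic-distance formulations would require independent estimates; but given Theorem \ref{known}, both characterizations follow simultaneously from the same lemma applied to the two choices of $\phi$.
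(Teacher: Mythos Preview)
The paper does not prove Theorem~\ref{haknown}; it is quoted from \cite{Kim} and \cite{Kararkiv} as a known input and then used in Section~\ref{pro}. So there is no in-paper argument to compare against.

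Your derivation is correct and gives a self-contained route from Theorem~\ref{known} to Theorem~\ref{haknown}. The auxiliary lemma
\[
\sup\Bigl\{s>0:\int^{+\infty}e^{su-\phi(u)}\,du<+\infty\Bigr\}=\liminf_{u\to+\infty}\frac{\phi(u)}{u}
\]
for nondecreasing $\phi$ is proved cleanly; the key step---using monotonicity to thicken the bad sequence $u_n$ into intervals $[\beta u_n,u_n]$ on which the integrand is uniformly large---is exactly what is needed for the divergence half. Applying it with $\phi(u)=\log\omega_\D(0,F_{e^u})^{-1}$ and $\phi(u)=d_\D(0,F_{e^u})$ then yields both formulas at once, and the fact that only the ratio $p/(\alpha+2)$ matters gives $h(f)=b(f)$ for free. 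This is arguably tidier than the original sources, where the harmonic-measure formula is obtained by more direct growth estimates (in the spirit of Corollary~\ref{l2}) and the hyperbolic-distance formula is then deduced via the Beurling--Nevanlinna comparison.

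One minor remark: the monotonicity of $r\mapsto\omega_\D(0,F_r)$ does not follow solely from the inclusion $U_{r_1}\subset U_{r_2}$ (the boundary sets change too); you need the separation argument of Lemma~\ref{markov}, i.e.\ that $F_{r_1}$ separates $0$ from $F_{r_2}$. You implicitly use this, so it is worth stating.
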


We now discuss new results that give similar integral conditions and characterizations in terms of extremal distance and reduced extremal distance. We observe that the equivalence of (2) and (3) in Theorem \ref{known} is not trivial since, in general, $\omega_{\D} (0,F_r)$ and  $e^{-d_{\D}(0, F_r)}$ are not comparable. More precisely, the Beurling-Nevanlinna projection theorem \cite[p.\ 9]{Pog} implies that, for every $r>0$,
\[ \omega_{\D} (0,F_r)\ge \frac{2}{\pi} e^{-d_{\D} (0,F_r)}\]
but the reverse inequality fails in general as the author proved in \cite{Karindiana}. However, integrating as in Theorem \ref{known} implies that the integrals are finite or infinite at the same time. Furthermore, taking limits as in Theorem \ref{haknown} shows that  $\omega_{\D} (0,F_r)$ and  $d_{\D}(0, F_r)$ can be equally used to determine the Hardy number.

In the same spirit, a possible question is whether we could obtain similar conditions involving the extremal distance $\lambda$ or the reduced extremal distance $\delta$. Actually, it is known that \cite[p.\ 164]{Gar}, for every $r>0$,
\[\omega_{\D} (0,F_r)\le e^{-\pi \delta_{\D} (0,F_r)}. \] 
The reverse inequality fails in general if $F_r$ has more than one component. However, even though $\omega_{\D} (0,F_r)$ and  $e^{-\delta_{\D}(0, F_r)}$ are not comparable, one might expect that we could obtain similar conditions to  Theorem \ref{known} by integrating. In other words, the following question arises.
\begin{question}\label{que} Let $p>0$ and $\alpha\ge -1$. Is it true that $f\in A_{\alpha}^p(\D)$ if and only if 
\[\int_{0}^{+\infty} r^{p-1} e^{-\pi (\alpha+2) \delta_{\D} (0,F_r)} dr<+\infty?\] 
\end{question}
This question was posed to me by Pietro Poggi-Corradini in personal communication. In Section \ref{exa} we prove that the answer is negative by providing a counter-example. Moreover, we found how to modify the integral condition so as to get a positive answer.

First, we observe that if $F_r$ had only one component then $\omega_{\D} (0,F_r)$  and $e^{-\pi \delta_{\D} (0,F_r)}$ would be comparable \cite[p.\ 164]{Gar}. Consequently, the key idea is to consider one component of $F_r$. Studying the problem in detail, we found that the appropriate component is the following one. Let $I_r$ be an enumeration of the components of $F_r$, which we denote by ${\left\{ {F_r^i } \right\}_{i \in I_r } }$. We consider a component $F_r^*$ such that
\[\omega_{\D}(0,F_r^*)=\max \{\omega_{\D}(0,F_r^i):i \in I_r\}.\]
We note that the existence of $F_r^*$ is not trivial since the harmonic measures $\omega_{\D}(0,F_r^i)$, $i\in I_r$, are considered in different domains. However, in Section \ref{pre} we show that there is always such a component which also satisfies 
\[\delta_{\D} (0,F_r^*)=\min \{ \delta_{\D} (0,F_r^i): i\in I_r\}\,\,\,{\rm and}\,\,\,\lambda_{\D} (0,F_r^*)=\min \{ \lambda_{\D} (0,F_r^i): i\in I_r\}.\]
Therefore, in Section \ref{pro} we prove the following result.

\begin{theorem}\label{main}
Let $f$ be a conformal mapping of $\D$. If $p>0$ and $\alpha \ge -1$, the following statements are equivalent.
\begin{enumerate} 
	\item $f\in A_{\alpha}^p (\D)$,	
	\smallskip
	\item $\displaystyle \int_{0}^{+\infty} r^{p-1} \omega_{\D} (0,F_r^*)^{\alpha+2} dr<+\infty$, \smallskip
	\item $\displaystyle \int_{0}^{+\infty} r^{p-1}e^{-\pi (\alpha+2) \delta_{\D} (0,F_r^*)} dr<+\infty$, \smallskip
	\item $\displaystyle \int_{0}^{+\infty} r^{p-1}e^{-\pi (\alpha+2) \lambda_{\D} (0,F_r^*)} dr<+\infty$.
\end{enumerate}
\end{theorem}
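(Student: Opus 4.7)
The plan is to split the four-way equivalence into the block (2)$\Leftrightarrow$(3)$\Leftrightarrow$(4), a purely \emph{local} assertion about the single connected arc $F_r^*$, and the block (1)$\Leftrightarrow$(2), which carries the \emph{global} $A_\alpha^p$-content.

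For (2)$\Leftrightarrow$(3)$\Leftrightarrow$(4), since $F_r^*$ is a single connected component, the classical two-sided estimate
\[
c_1\, e^{-\pi\delta_\D(0,F_r^*)} \le \omega_\D(0,F_r^*) \le e^{-\pi\delta_\D(0,F_r^*)}
\]
for connected boundary sets (\cite[p.~164]{Gar}) applies with universal constants, together with a parallel comparability between $\delta_\D(0,F_r^*)$ and $\lambda_\D(0,F_r^*)$ on connected sets. The preliminary lemma of Section~\ref{pre} guarantees that the same component $F_r^*$ simultaneously realizes $\max_i\omega_\D(0,F_r^i)$, $\min_i\delta_\D(0,F_r^i)$ and $\min_i\lambda_\D(0,F_r^i)$, so these estimates apply uniformly in $r$ and the three integrals are finite or infinite simultaneously.

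For (1)$\Rightarrow$(2), Theorem~\ref{known}(2) supplies $\int_0^{+\infty} r^{p-1}\omega_\D(0,F_r)^{\alpha+2}\,dr<+\infty$. Since $F_r^*\subset F_r$, a standard comparison argument (any Brownian path from $0$ that reaches $F_r^*$ before exiting the relevant domain is captured by the event that it reaches the larger set $F_r$) yields $\omega_\D(0,F_r^*)\le\omega_\D(0,F_r)$; raising to the power $\alpha+2$ and integrating gives (2).

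The main obstacle is the converse (2)$\Rightarrow$(1). The counter-example to Question~\ref{que} in Section~\ref{exa} will show that the pointwise reverse inequality $\omega_\D(0,F_r)\le C\,\omega_\D(0,F_r^*)$ can fail badly when $F_r$ has many components of comparable harmonic measure, so one cannot simply invert the previous step through Theorem~\ref{known}. I would instead proceed via the layer-cake decomposition
\[
\int_\D |f(z)|^p(1-|z|^2)^\alpha\,dA(z) \;=\; p\int_0^{+\infty} r^{p-1}\!\int_{\{|f|>r\}\cap\D}(1-|z|^2)^\alpha\,dA(z)\,dr,
\]
and control the inner weighted area through the sublevel component $\Omega_r\ni 0$, where $\Omega_r$ is the component of $\{z\in\D:|f(z)|<r\}$ containing $0$. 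The geometric idea, based on Section~\ref{pre}, is that $F_r^*$ sits on $\partial\Omega_r$ as the dominant boundary component in the min-$\delta$ sense, and the connected-case comparabilities from the first block bound the weighted area of $\D\setminus\Omega_r$ by a universal constant times $\omega_\D(0,F_r^*)^{\alpha+2}$. I expect the technical heart of the proof to lie in making this weighted-area estimate rigorous and in separately treating the Hardy case $\alpha=-1$, where the area integral degenerates into a boundary $H^p$-norm.
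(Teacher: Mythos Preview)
Your treatment of (2)$\Leftrightarrow$(3)$\Leftrightarrow$(4) and of (1)$\Rightarrow$(2) matches the paper's (with the minor caveat that the inequality $\omega_\D(0,F_r^*)\le\omega_\D(0,F_r)$ is not literally ``Brownian path hits a subset'': the two harmonic measures live in different domains, and the paper instead compares the complements $F_r^c\subset F_r^{*c}$ on $\partial\D$ together with $\Omega_r\subset\Omega_r^*$).

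The real divergence is in (2)$\Rightarrow$(1). You correctly note that one cannot reverse the pointwise inequality back through Theorem~\ref{known}(2), but you then propose a layer-cake/weighted-area argument whose key step --- bounding the weighted area of $\D\setminus\Omega_r$ by $C\,\omega_\D(0,F_r^*)^{\alpha+2}$ --- is left as an expectation rather than an argument, and is in fact not obviously true when $F_r$ has many components. The paper bypasses all of this with a two-line trick: route through the \emph{hyperbolic distance} condition of Theorem~\ref{known}(3) instead. If $F_r'$ is the component of $F_r$ on which $d_\D(0,F_r)$ is attained, the Beurling--Nevanlinna projection theorem gives
\[
e^{-d_\D(0,F_r)} = e^{-d_\D(0,F_r')} \le \tfrac{\pi}{2}\,\omega_\D(0,F_r') \le \tfrac{\pi}{2}\,\omega_\D(0,F_r^*),
\]
the last inequality being exactly the defining maximality of $F_r^*$. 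Thus finiteness of the integral in (2) forces finiteness of $\int_0^\infty r^{p-1}e^{-(\alpha+2)d_\D(0,F_r)}\,dr$, and Theorem~\ref{known} finishes. No area estimate, no separate treatment of $\alpha=-1$, is needed. You had all the ingredients (Theorem~\ref{known} has three equivalent conditions, not two) but chose the wrong one to close the loop.
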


Using $\omega_{\D} (0,F_r^*),\delta_{\D} (0,F_r^*)$ and $\lambda_{\D} (0,F_r^*)$ we can also give characterizations for the Hardy and the Bergman number of $f$ as follows. 

\begin{theorem}\label{hardynumber} Let $f$ be a conformal mapping of $\D$. Then 
\begin{enumerate}
	\item $\displaystyle h(f)=b(f)=\liminf_{r\to +\infty}\frac{\log \omega_{\D} (0,F_r^*)^{-1} }{\log r}$, \smallskip
	\item $\displaystyle h(f)=b(f)=\liminf_{r\to +\infty}\frac{\pi\delta_{\D} (0,F_r^*)}{\log r}$,\smallskip
	\item $\displaystyle h(f)=b(f)=\liminf_{r\to +\infty}\frac{\pi\lambda_{\D} (0,F_r^*)}{\log r}$.
\end{enumerate}

\end{theorem}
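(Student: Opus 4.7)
My plan is to combine Theorem \ref{main} (taken with $\alpha=-1$, so that $A^p_{-1}(\D)=H^p(\D)$) with Theorem \ref{haknown} and the sharp Beurling-type comparison of harmonic measure with reduced extremal distance on a single connected component. Denote the three right-hand sides by $\ell_\omega,\ell_\delta,\ell_\lambda$. First I would show $h(f)\ge \ell_\star$ for each $\star\in\{\omega,\delta,\lambda\}$ by a single argument: given $p<\ell_\star$, pick $\epsilon>0$ with $p+\epsilon<\ell_\star$. By the definition of liminf, the corresponding quantity---$\omega_\D(0,F_r^*)$, $e^{-\pi\delta_\D(0,F_r^*)}$, or $e^{-\pi\lambda_\D(0,F_r^*)}$---is bounded by $r^{-(p+\epsilon)}$ for all $r$ large. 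The integrand in the matching condition of Theorem \ref{main} is then dominated by $r^{-1-\epsilon}$ near infinity and by $r^{p-1}$ near the origin, so the integral converges and forces $f\in H^p(\D)$, giving $h(f)\ge p$. Letting $p\nearrow \ell_\star$ finishes this half.

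For the reverse inequalities I would begin with $\omega$: since $F_r^*\subset F_r$, the Brownian-motion interpretation of harmonic measure yields $\omega_\D(0,F_r^*)\le \omega_\D(0,F_r)$ at every $r>0$ (any path from $0$ that hits $F_r^*$ before leaving $\D$ must already have hit $F_r$). Taking $-\log$, dividing by $\log r$, and applying Theorem \ref{haknown} then shows $\ell_\omega\ge h(f)$, hence $\ell_\omega=h(f)$. For $\delta$, the key point is that $F_r^*$ is by construction a single connected component of $F_r$, so the universal upper bound $\omega_\D(0,F_r^*)\le e^{-\pi\delta_\D(0,F_r^*)}$ recalled in the introduction is complemented by a reverse estimate $\omega_\D(0,F_r^*)\ge c\,e^{-\pi\delta_\D(0,F_r^*)}$ with an absolute $c>0$ (the sharpness of the Beurling-type inequality on a connected boundary arc, see \cite[p.\ 164]{Gar}). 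Thus $\log\omega_\D(0,F_r^*)^{-1}=\pi\delta_\D(0,F_r^*)+O(1)$, and dividing by $\log r$ gives $\ell_\delta=\ell_\omega=h(f)$. The same reasoning with the analogous comparison for extremal distance on a connected component (supplied in Section \ref{pre}) then yields $\ell_\lambda=h(f)$.

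The principal obstacle in executing this plan is verifying the two-sided comparisons $\omega_\D(0,F_r^*)\asymp e^{-\pi\delta_\D(0,F_r^*)}\asymp e^{-\pi\lambda_\D(0,F_r^*)}$: the reverse inequalities are false on all of $F_r$ whenever $F_r$ is disconnected, as the introduction emphasises, which is the whole reason for isolating the $\ast$-component. Once one has set up $F_r^*$ as in Section \ref{pre}, so that $\omega_\D(0,F_r^*)$, $\delta_\D(0,F_r^*)$ and $\lambda_\D(0,F_r^*)$ are all computed in the subdomain $U^*$ (the component of $\D\setminus F_r^*$ containing $0$) with $F_r^*$ a single connected boundary arc, the classical extremal-length machinery for a connected boundary piece applies and supplies the required bounds; the remaining manipulations are pure bookkeeping with liminfs.
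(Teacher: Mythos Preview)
Your argument is correct, and for parts (2) and (3) it coincides with the paper's: once $\ell_\omega=h(f)$ is known, the two-sided estimates (\ref{haex}) and (\ref{hare}) applied to the single arc $F_r^*$ give $\log\omega_\D(0,F_r^*)^{-1}=\pi\delta_\D(0,F_r^*)+O(1)=\pi\lambda_\D(0,F_r^*)+O(1)$, exactly as you outline. The difference is in how you obtain $h(f)\ge\ell_\star$. You route this through Theorem \ref{main}: a pointwise bound $\Phi(r)\le r^{-(p+\epsilon)}$ forces the integral to converge and hence $f\in H^p(\D)$. The paper instead avoids Theorem \ref{main} entirely and sandwiches $\omega_\D(0,F_r^*)$ directly between two quantities already handled by Theorem \ref{haknown}, namely
\[
\tfrac{2}{\pi}\,e^{-d_\D(0,F_r)}\le \omega_\D(0,F_r^*)\le \omega_\D(0,F_r),
\]
where the left inequality is the Beurling--Nevanlinna projection estimate (\ref{projection}) and the right one is (\ref{harmonic}); dividing $-\log$ by $\log r$ and taking $\liminf$ then pins $\ell_\omega$ between the two equal limits of Theorem \ref{haknown}. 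Your route is slightly longer (it leans on Theorem \ref{main}, which in turn rests on Theorem \ref{known}), but it has the advantage of treating all three $\ell_\star$ uniformly in one stroke. One small caveat: your Brownian-motion justification of $\omega_\D(0,F_r^*)\le\omega_\D(0,F_r)$ is a bit too quick, since the two harmonic measures are taken in the \emph{different} subdomains $\Omega_r^*\supset\Omega_r$; the clean way is the complement argument $\omega_{\Omega_r}(0,F_r^c)\le\omega_{\Omega_r^*}(0,F_r^{*c})$ used in the proof of Theorem \ref{main}.
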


\section{preliminaries}\label{pre}

\subsection{Extremal distance}

Let $D$ be a plane domain and $E,F$ be two disjoint sets on $\partial D$. Let $\Gamma$ be the family of all rectifiable curves in $D$ joining $E$ to $F$. We consider non-negative Borel measurable functions $\rho$ in $D$ and define 
\[L\left( {\Gamma ,\rho } \right) = \mathop {\inf }\limits_{\gamma  \in \Gamma } \int_\gamma  {\rho \left| {dz} \right|}\,\,\,{\rm and}\,\,\,A\left( {D,\rho } \right) = \int \int_D {{\rho ^2}dxdy}.\]
The extremal distance $\tilde{\lambda}_D (E,F)$ between $E$ and $F$ in $D$ is defined by
\[ \tilde{\lambda}_D (E,F)= \mathop {\sup }\limits_\rho  \frac{{L{{\left( {\Gamma ,\rho } \right)}^2}}}{{A\left( {D,\rho } \right)}},\]
where the supremum is taken over all $\rho$ that satisfy $0 < A\left( {D,\rho } \right) <  + \infty $. Now, let $D$ be a Jordan domain in $\C$, $E$ be an arc on $\partial D$ and $z_0\in D$. Consider all Jordan arcs $\sigma \subset D$ joining $z_0$ to $\partial D \backslash E$ and define 
\[\lambda_D(z_0,E)=\sup_{\sigma}\tilde{\lambda}_{D\backslash \sigma} (\sigma,E),\]
where the supremum is taken over all such Jordan arcs. The quantity $\lambda_D(z_0,E)$ is conformally invariant \cites{Beu,Gar} and it is related to the harmonic measure $\omega_D (z_0,E)$ \cite[p.\ 145]{Gar} in the following way
\begin{equation}\label{haex}
e^{-\pi\lambda_D(z_0,E)}\le\omega_D (z_0,E)\le \frac{8}{\pi}e^{-\pi\lambda_D(z_0,E)}.
\end{equation}

\subsection{Reduced extremal distance}

Let $D$ be a finitely connected Jordan domain in $\C$, let $E$ be a finite union of subarcs of $\partial D$ and $z_0\in D$. For $\varepsilon >0$, we set $B_{\varepsilon}=B(z_0,\varepsilon)=\{z\in \C:|z-z_0|<\varepsilon\}$. The reduced extremal distance $\delta_D(z_0,E)$ \cite{Gar} is defined by
\[\delta_D(z_0,E)=\lim_{\varepsilon \to 0}(\tilde{\lambda}_{D\backslash B_{\varepsilon}}(\partial B_{\varepsilon},E)-\tilde{\lambda}_{D\backslash B_{\varepsilon}}(\partial B_{\varepsilon},\partial D)).\]
The reduced extremal distance is conformally invariant \cite[p.\ 163]{Gar}. If $E$ is a single arc on $\partial D$, then \cite[p.\ 164]{Gar} $\delta_D(z_0,E)$ is related to the harmonic measure $\omega_D (z_0,E)$ in the following way
\begin{equation}\label{hare}
\frac{2}{\pi}e^{-\pi \delta_D(z_0,E)}\le \omega_D (z_0,E)\le e^{-\pi \delta_{D}(z_0,E)}.
\end{equation}

\subsection{Harmonic measure} Now, we state two properties of harmonic measure that we use in the proofs below. The first one is a corollary of the Beurling-Nevanlinna projection theorem. 

\begin{lemma}[{\cite[p.\ 107]{Gar}}]\label{harbeu}
Let $D$ be an unbounded simply connected domain, $z \in D$ and $\zeta \in \partial D$. Then, for $0<r<|z-\zeta|$,
\[ \omega_D (z, B(\zeta,r)\cap \partial D)\le K\left( \frac{r}{|z-\zeta|}\right)^{1/2},\]
where $K>0$ is a constant.
\end{lemma}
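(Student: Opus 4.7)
The plan is to deduce the bound as a standard corollary of the Beurling-Nevanlinna projection theorem via the square-root uniformization. After translating to put $\zeta=0$ and writing $d=|z|$, the task becomes to show $\omega_{D}(z,B(0,r)\cap\partial D)\le K(r/d)^{1/2}$ for $0<r<d$. Since $D$ is simply connected and $0\notin D$, I would fix a single-valued holomorphic branch $\varphi$ of $\sqrt{\,\cdot\,}$ on $D$, and set $D'=\varphi(D)$, $w_{0}=\varphi(z)$. Then $D'$ is simply connected with $0\in\partial D'$, $|w_{0}|=\sqrt{d}$, and crucially $D'\cap(-D')=\varnothing$, because the two square roots of a point are antipodal. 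Conformal invariance together with $\varphi(B(0,r)\cap\partial D)\subset B(0,\sqrt{r})\cap\partial D'$ and monotonicity of harmonic measure gives
\[
\omega_{D}(z,B(0,r)\cap\partial D)\;\le\;\omega_{D'}\bigl(w_{0},B(0,\sqrt{r})\cap\partial D'\bigr).
\]

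Next I would enlarge $D'$ to a canonical domain where harmonic measure is explicit. The disjointness of $D'$ and $-D'$ forces $D'$ to lie in a single component of the complement of a Jordan arc $\gamma\cup(-\gamma)$ from $0$ to $\infty$ that is symmetric under $w\mapsto -w$, where $\gamma\subset\C\setminus D'$; call this lens-shaped component $G$. By monotonicity,
\[
\omega_{D'}\bigl(w_{0},B(0,\sqrt{r})\cap\partial D'\bigr)\;\le\;\omega_{G}\bigl(w_{0},B(0,\sqrt{r})\cap\partial G\bigr).
\]
Pulling back by the conformal map $w\mapsto w^{2}$, the lens $G$ maps onto a slit plane of the form $\C\setminus\widetilde{\gamma}$, with $w_{0}\mapsto z_{0}$ satisfying $|z_{0}|=d$ and the relevant boundary piece sent into $B(0,r)\cap\widetilde{\gamma}$. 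A final monotonicity step enlarges $\widetilde{\gamma}$ to the half-line $(-\infty,0]$ (chosen so that $z_{0}$ remains at distance $d$ from $0$), reducing matters to the explicit estimate
\[
\omega_{\C\setminus(-\infty,0]}\bigl(z_{0},[-r,0]\bigr)\;\le\;\tfrac{4}{\pi}\arctan\sqrt{r/d}\;\le\; K(r/d)^{1/2},
\]
the first inequality being a half-plane computation via yet another $\sqrt{\,\cdot\,}$, the second being $\arctan x\le x$.

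The main technical obstacle is making the enlargement step rigorous: one must produce the symmetric Jordan arc $\gamma\cup(-\gamma)$ and verify both that $G$ still contains $w_{0}$ and that $B(0,\sqrt{r})\cap\partial G\supset B(0,\sqrt{r})\cap\partial D'$. The former uses that the component of $\hat{\C}\setminus D'$ joining $0$ to $\infty$ is antipodally symmetric, being the $w\mapsto w^{2}$ preimage of a connected piece of $\hat{\C}\setminus D$; the latter holds because $\gamma$ was chosen inside $\C\setminus D'$. Once these geometric verifications are in place, the chain of inequalities produces the universal constant $K$; this is the classical argument recorded in \cite[p.\ 107]{Gar}, which supplies a concrete admissible value.
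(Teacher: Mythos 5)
The paper does not prove this lemma: it is quoted from Garnett--Marshall \cite[p.~107]{Gar} and the surrounding text identifies it as a corollary of the Beurling--Nevanlinna projection theorem, so your argument has to stand on its own. It does not, because the ``enlargement'' step --- the one you yourself flag as the main obstacle --- is where the entire content of the estimate lives, and as described it fails. Any arc $\gamma$ with $\gamma\cup(-\gamma)$ disjoint from $D'$ must avoid both $D'$ and $-D'$, hence must lie in the antipodally symmetric set $S=\{w:\ w^{2}\in\hat{\mathbb{C}}\setminus D\}$ (note that $\hat{\mathbb{C}}\setminus D'=(-D')\cup S$ itself is \emph{not} symmetric, contrary to what you assert). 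Two problems follow. First, $S$ is only a continuum joining $0$ to $\infty$; continua need not be arcwise connected, so the symmetric Jordan arc you want need not exist inside $S$. Second, and more decisively, $\hat{\mathbb{C}}\setminus S$ is exactly the full preimage $D'\sqcup(-D')$ of $D$ under $w\mapsto w^{2}$, so the component of the complement of any symmetric closed subset of $S$ that contains $w_{0}$ always contains $D'$ and, for $S$ itself, \emph{equals} $D'$. Your ``lens'' $G$ is therefore $D'$ again, the squaring map returns you to $D$, and no comparison with a slit plane has been achieved. The missing ingredient is precisely the statement that a complementary continuum from $\zeta$ to $\infty$ may be replaced by a ray without increasing the relevant harmonic measure; that is the Beurling projection theorem (proved by circular symmetrization/polarization), not a soft monotonicity of domains, and your outline never invokes it.

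The standard repair, consistent with the paper's citation, runs as follows. First use the Markov property (the paper's Lemma~\ref{markov}) to bound $\omega_{D}(z,B(\zeta,r)\cap\partial D)$ by the harmonic measure of the crosscuts $D\cap\partial B(\zeta,r)$ in the component of $D\setminus\overline{B(\zeta,r)}$ containing $z$; then enlarge that component to the corresponding component of $\hat{\mathbb{C}}\setminus(\overline{B(\zeta,r)}\cup\sigma)$, where $\sigma=\hat{\mathbb{C}}\setminus D$ is a continuum joining $\zeta$ to $\infty$. A M\"obius map sending the exterior of $\overline{B(\zeta,r)}$ onto $\D$ with $z\mapsto 0$ sends $\infty$ to a point of modulus $r/|z-\zeta|$, so the image of $\sigma$ contains a continuum joining $\partial\D$ to that point; Beurling's projection theorem then dominates the harmonic measure of $\partial\D$ by that in $\D\setminus[-1,-r/|z-\zeta|]$, which is computed explicitly (by your half-plane calculation) to be $\tfrac{4}{\pi}\arctan\sqrt{r/|z-\zeta|}\le\tfrac{4}{\pi}\sqrt{r/|z-\zeta|}$. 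Your closing computation and constant are fine; only the passage from the general complementary continuum to a slit is unsupported, and that passage is the theorem.
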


The second result we need is an immediate consequence of the strong Markov property for harmonic measure \cite[p.\ 282]{Bet}.

\begin{lemma}\label{markov}
Let $D_1,D_2\subset \C$ be two simply connected domains. Assume that $D_1 \subset D_2$ and let $F\subset \partial D_2 \backslash \partial D_1$ and $\sigma =\partial D_1 \backslash \partial D_2$. Then, for $z\in D_1$,
\[\omega_{D_2} (z,F) \le \omega_{D_1} (z, \sigma).\]
\end{lemma}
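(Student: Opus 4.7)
My plan is to argue probabilistically using the strong Markov property for planar Brownian motion, which is precisely the tool the statement advertises. I interpret $\omega_{D_j}(z,\cdot)$ as the exit distribution of a Brownian motion $B_t$ started at $z$ and run until it first leaves $D_j$, and I denote by $\tau_1,\tau_2$ the corresponding exit times. Since $D_1\subset D_2$, we automatically have $\tau_1\le\tau_2$, so the Brownian path must exit $D_1$ before exiting $D_2$.

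The key observation is the decomposition $\partial D_1 = \sigma \cup (\partial D_1 \cap \partial D_2)$. If the motion exits $D_1$ at a point of $\partial D_1\cap\partial D_2$, then simultaneously $\tau_2=\tau_1$ and $B_{\tau_2}\in\partial D_1$; but by hypothesis $F\subset\partial D_2\setminus\partial D_1$, so in this case $B_{\tau_2}\notin F$. Consequently, the event $\{B_{\tau_2}\in F\}$ is contained in $\{B_{\tau_1}\in\sigma\}$, and passing to probabilities I get
\[\omega_{D_2}(z,F)=P_z(B_{\tau_2}\in F)\le P_z(B_{\tau_1}\in\sigma)=\omega_{D_1}(z,\sigma),\]
which is exactly the claimed inequality.

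There is no real obstacle here; the proof is essentially a tautology once one phrases everything in terms of Brownian exit distributions. The only points requiring any care are the measurability of $\sigma$ as a boundary subset and the disposal of the overlap $\partial D_1\cap\partial D_2$, both of which are routine for simply connected domains. If one prefers a purely analytic presentation, the same inequality follows from the maximum principle by comparing on $D_1$ the restriction of the harmonic function $z\mapsto\omega_{D_2}(z,F)$ (whose boundary values vanish on $\partial D_1\cap\partial D_2$ and are bounded by $1$ on $\sigma$) against $z\mapsto\omega_{D_1}(z,\sigma)$.
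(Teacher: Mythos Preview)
Your proof is correct and aligns with the paper's treatment: the paper does not prove the lemma in detail but simply records it as ``an immediate consequence of the strong Markov property for harmonic measure,'' citing \cite[p.~282]{Bet}. Your argument supplies exactly those details, and your alternative maximum-principle formulation is equally valid. One small remark: the core of your argument is really the event containment $\{B_{\tau_2}\in F\}\subset\{B_{\tau_1}\in\sigma\}$, which follows from $\tau_1\le\tau_2$ and the disjointness $F\cap\partial D_1=\emptyset$; the strong Markov property itself (restarting at $\tau_1$) is not strictly needed for the inequality, though it underlies the more refined identity $\omega_{D_2}(z,F)=\int_\sigma \omega_{D_2}(w,F)\,d\omega_{D_1}(z,w)$ from which the bound also follows by replacing $\omega_{D_2}(w,F)$ by $1$.
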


\subsection{Auxiliary lemmas} From now on, let $f$ be a conformal mapping from $\D$ onto an unbounded simply connected domain. For $r>0$, set $F_r=\{z\in\D:\ |f(z)|=r\}$. Note that $f(F_r)=f(\D)\cap\{z\in \C: |z|=r\}$ is the union of countably many open arcs in $f(\D)$. By Proposition 2.14 in \cite[p.\ 29]{Pom} it follows that $F_r$ is the union of countably many analytic open arcs in $\D$ so that each such arc has two distinct endpoints on $\partial\D$.  Let $N(r)\in \mathbb{N}\cup \{+\infty\}$ be the number of components of $F_r$. Then we set
\[I_r = \left\{ \begin{array}{l}
\left\{ {1,2, \ldots ,N\left( r \right)} \right\},\,{\rm{if}}\,N\left( r \right) <  + \infty  \\ 
\mathbb{N},\,\,\,\,\,\,\,\,\,\,\,\,\,\,\,\,\,\,\,\,\,\,\,\,\,\,\,\,\,\,\,\,\,\,\,\,\,\,\,\,{\rm{if}}\,N\left( r \right) =  + \infty  \\ 
\end{array} \right.\] 
and denote by ${\left\{ {F_r^i } \right\}_{i \in I_r } }$ the components of $F_r$.

\begin{lemma}\label{maxi} For every $r>0$, there exists a component $F_r^*$ of $F_r$ such that
\[\omega_{\D}(0,F_r^*)=\max \{\omega_{\D}(0,F_r^i):i \in I_r\}.\]
\end{lemma}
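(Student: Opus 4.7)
Plan: The lemma is trivial if $N(r)<+\infty$, and also if $r=|f(0)|$, for then $0\in F_r$ and the component of $F_r$ containing $0$ has harmonic measure $1$. Assume henceforth $N(r)=+\infty$ and $r\ne|f(0)|$. The goal is to show that for every $\delta>0$ only finitely many $i\in I_r$ satisfy $\omega_{\D}(0,F_r^i)>\delta$, which immediately gives existence of the maximum.

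The starting observation is that $f(F_r)=f(\D)\cap\{|w|=r\}$ is a countable union of pairwise disjoint open arcs in $f(\D)$, so the arc-lengths $l_i$ of $f(F_r^i)$ satisfy $\sum_{i\in I_r} l_i\le 2\pi r$ and in particular $l_i\to 0$. The plan is to establish a uniform bound of the form $\omega_{\D}(0,F_r^i)\le C(r,f(0))\,l_i^{1/2}$, which combined with $l_i\to 0$ yields the required conclusion.

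By conformal invariance, $\omega_{\D}(0,F_r^i)=\omega_{U_i^*}(f(0),f(F_r^i))$, where $U_i^*=f(U_i)$ is a simply connected component of $f(\D)\setminus f(F_r^i)$ and $V_i^*=f(V_i)$ is the other component. Since $f(F_r^i)$ is a bounded arc inside the unbounded domain $f(\D)$, at least one of $U_i^*$ and $V_i^*$ is unbounded. If $U_i^*$ is unbounded, Lemma \ref{harbeu} applies directly in $U_i^*$: taking $\zeta$ to be the midpoint of $f(F_r^i)\subset\partial U_i^*$, covering the arc by a single ball of radius $l_i$, and using $|f(0)-\zeta|\ge\bigl|r-|f(0)|\bigr|>0$, one obtains
\[
\omega_{\D}(0,F_r^i)\le K\left(\frac{l_i}{\bigl|r-|f(0)|\bigr|}\right)^{1/2}
\]
for an absolute constant $K$. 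If instead $V_i^*$ is the unbounded component, I would apply Lemma \ref{harbeu} in $V_i^*$ and then transfer the estimate to $U_i^*$ using Lemma \ref{markov} across the crosscut $f(F_r^i)$, producing a bound of the same form.

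The main technical obstacle lies in the second case (unbounded $V_i^*$, bounded $U_i^*$): Lemma \ref{harbeu} controls the harmonic measure from points inside the unbounded domain, while we need an estimate at $f(0)\in U_i^*$. Transferring the estimate across the crosscut requires either a careful strong-Markov argument or, alternatively, the use of extremal distance: bounding $\omega\le (8/\pi)e^{-\pi\lambda}$ via \eqref{haex} and then producing a direct extremal-length lower bound $\lambda_{\D}(0,F_r^i)\ge\tfrac{1}{\pi}\log(1/l_i)+O(1)$ by a module estimate involving the disk $\{|w|<r\}$ (or $\{|w|>r\}$). Once the uniform bound $\omega_{\D}(0,F_r^i)\le C\,l_i^{1/2}$ is established, given $\delta>0$ only the finitely many indices $i$ with $l_i$ above a threshold can satisfy $\omega_{\D}(0,F_r^i)>\delta$, so the supremum is attained over this finite subfamily.
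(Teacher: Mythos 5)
Your overall strategy is the same as the paper's: use $\sum_i l_i\le 2\pi r$ to force the arc lengths to zero, and convert small arc length into small harmonic measure via the Beurling--Nevanlinna estimate of Lemma \ref{harbeu}. The case where $U_i^*$ (the component of $f(\D)\setminus f(F_r^i)$ containing $f(0)$) is unbounded is handled correctly. But the remaining case is a genuine gap, and your first proposed fix for it does not work. Lemma \ref{harbeu} applied in $V_i^*$ estimates harmonic measure evaluated at points \emph{of} $V_i^*$, while you need an estimate at $f(0)\in U_i^*$; there is no way to ``transfer'' such an estimate across the crosscut, and Lemma \ref{markov} points in the wrong direction for this (it bounds harmonic measure in the \emph{larger} domain by the harmonic measure of a separating crosscut in the \emph{smaller} one, with the evaluation point fixed in the smaller domain). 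Moreover, without further structure you cannot assume the bounded case occurs for at most one index $i$: nested configurations can in principle produce many arcs $f(F_r^i)$ each of which separates $f(0)$ from infinity, so you cannot simply discard these indices as finitely many exceptions.

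The paper circumvents exactly this difficulty by a preliminary reduction you are missing: using Pommerenke's crosscut decomposition of $D\setminus C$ (Proposition 2.13 in \cite{Pom}), it replaces the full family $\{f(F_r^i)\}$ by the subfamily of crosscuts $C_k$ lying on the boundary of the component $D_0$ of $D\setminus C$ containing $f(0)$; the strong Markov property (Lemma \ref{markov}, used in its correct direction) shows the supremum of the harmonic measures is unchanged, and for these particular crosscuts at most one of the components $B_k$ can be bounded, so Lemma \ref{harbeu} applies to all but one index. Your second, briefly sketched alternative --- bounding $\lambda_{\D}(0,F_r^i)$ from below by the module of the annulus $\{l_i<|w-\zeta_i|<|r-|f(0)||\}$ and invoking \eqref{haex} --- is in fact a viable route that would treat bounded and unbounded components uniformly and would make the crosscut reduction unnecessary; but as written it is a one-clause sketch, not a proof, so the argument is incomplete as it stands.
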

\begin{proof}
Let $f(\D)=D$. Fix an $r>|f(0)|$ and let $C=\{z\in \C: |z|=r\}$. Then, since $D$ is unbounded, $D\cap C \neq \emptyset$. We now apply Proposition 2.13 in \cite[p.\ 28]{Pom}. Since $f(0)\in D\backslash C$, there are countably many crosscuts $C_k \subset C$, $k\in J$, of $D$ such that
\[D=D_0 \cup \bigcup_{k\in J}D_k\cup \bigcup_{k\in J} C_k,\]
where $D_0$ is the component of $D\backslash C$ containing $f(0)$ and $D_k$ are disjoint domains with
\[C_k=D\cap \partial D_k \subset D\cap \partial D_0\]
for $k\in J$. It follows that if $z\in D$ lies in the unbounded component of $\C \backslash C$ then $z\in D_k$ for exactly one $k$. Note that $D_k$, for $k\in J$, may contain points of $C$. See for example $D_1$ in Fig. \ref{pomme}.

We notice that since $D$ is unbounded, there is at most one $C_{k}$ such that the component of $D\backslash C_{k}$ containing $f(0)$ is bounded. Indeed, since $D$ is unbounded, there is some $k^*\in J$ such that the component $D_{k^*}$ of $D\backslash C_{k^*}$ not containing $f(0)$ is unbounded (see for example $D_2$ in Fig. \ref{pomme}). Then the component of $D\backslash C_{k^*}$ containing $f(0)$ is either bounded or unbounded. For every $k\in J\backslash \{k^*\}$, the component of $D\backslash C_{k}$  containing $f(0)$ is unbounded since it contains $D_{k^*}$.

\begin{figure}
\begin{overpic}[width=0.6\textwidth]{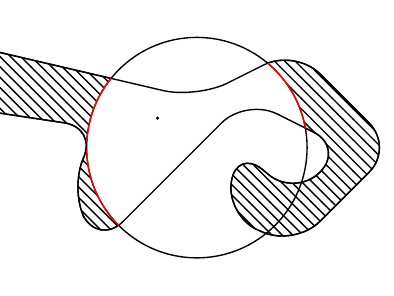}
\put  (35,40) {$f(0)$}
\put  (55,48) {$D_0$}
\put  (83,14) {$D_1$}
\put  (0,63) {$D_2$}
\put  (13,20) {$D_3$}
\put  (67,50) {\color{red} $C_1$}
\put  (25,45) {\color{red}$C_2$}
\put  (25,28) {\color{red}$C_3$}
\end{overpic}
\caption{The domains $D_k$ and the arcs $C_k$.}
\label{pomme}
\end{figure}

Now, let $B_i$ denote the component of $D\backslash f(F_r^i)$ containing $f(0)$. Then by the conformal invariance of harmonic measure we have
\begin{align}
\sup \{\omega_{\D }(0,F_r^i):i \in I_r\}=\sup \{\omega_{B_i}(f(0),f(F_r^i)):i \in I_r\}. \nonumber
\end{align}
We claim that
\[\sup \{\omega_{B_i}(f(0),f(F_r^i)):i \in I_r\}=\sup \{\omega_{B_k}(f(0),C_k):k \in J\}.\]
Indeed, we observe that for each $i\in I_r$ we have either $f(F_r^i)=C_k$ for some $k$ or, by Lemma \ref{markov},
\[\omega_{B_i}(f(0),f(F_r^i))\le \omega_{B_k}(f(0),C_k)\]
for some $k$. This implies that
\[\sup \{\omega_{B_i}(f(0),f(F_r^i)):i \in I_r\}\le\sup \{\omega_{B_k}(f(0),C_k):k \in J\}. \]
The reverse inequality comes directly from the fact that
\[\{\omega_{B_k}(f(0),C_k):k \in J\}\subset \{\omega_{B_i}(f(0),f(F_r^i)):i \in I_r\}.\]
Therefore, 
\begin{equation}\label{sup}
\sup \{\omega_{\D }(0,F_r^i):i \in I_r\}=\sup \{\omega_{B_k}(f(0),C_k):k \in J\}.
\end{equation}

 If $J$ is finite then the proof is complete. So, we suppose that $J$ is infinite. As we proved above, since $D$ is unbounded, there is at most one $C_{k}$ such that the component of $D\backslash C_{k}$ containing $f(0)$ is bounded. For every $k\in J$, let $l(C_k)$ be the length of the circular arc $C_k$ and $\zeta_k$ be the midpoint of the arc $C_k$. Since the series 
\[0\le \sum\limits_{k\in J }  l(C_k) \le 2\pi r\]
converges, we infer that
\begin{equation}\label{limit}
\lim_{k \to  + \infty } l(C_k) = 0
\end{equation}
So, there is a $k_1 \in \mathbb{N}$ such that $l(C_k)<r-|f(0)|$ and $B_k$ is unbounded for every $k\ge k_1$. By Lemma \ref{harbeu} it follows that, for $k \ge k_1$ we have
\begin{equation}\label{anis}
\omega_{B_k}(f(0), B(\zeta_k,l(C_k))\cap \partial B_k)\le K\left( \frac{l(C_k)}{r-|f(0)|} \right)^{1/2},
\end{equation}
where $K>0$ is a constant. Since $C_k \subset B(\zeta_k,l(C_k))\cap \partial B_k$, by monotonicity, we get
\[\omega_{B_k} (f(0), C_k)\le\omega_{B_k}(f(0), B(\zeta_k,l(C_k))\cap \partial B_k).\]
This in combination with (\ref{anis}) and (\ref{limit}) gives
\[\lim_{k \to  + \infty } \omega_{B_k} (f(0), C_k)= 0.\]
Therefore, there is a $ {k_2} > k_1$ such that $\omega_{B_k} (f(0), C_k) \le \omega_{B_1} (f(0), C_1)$ for every $k \ge {k_2}$. This implies that there exists an arc $C_{k^*}$, where $k^* \in\{1,2,\dots,k_2\}$, such that
\[\omega_{B_{k^*}}(f(0),C_{k^*})=\max \{\omega_{B_k}(f(0),C_k):k \in J\}.\]
By this and (\ref{sup}) it follows that there is a component $F_r^*$ of $F_r$ such that
\[\omega_{\D}(0,F_r^*)=\max \{\omega_{\D}(0,F_r^i):i \in I_r\}\]
and the proof is complete.
\end{proof}

\begin{lemma}\label{decreasing} The functions $\omega_{\D}(0,F_r^*),e^{-\pi \delta_{\D}(0,F_r^*) }$ and $e^{-\pi \lambda_{\D}(0,F_r^*) }$ are decreasing in $r>0$ and thus they are measurable. Moreover,
\[\delta_{\D} (0,F_r^*)=\min \{ \delta_{\D} (0,F_r^i): i\in I_r\}\,\,\,and\,\,\,\lambda_{\D} (0,F_r^*)=\min \{ \lambda_{\D} (0,F_r^i): i\in I_r\}.\]
\end{lemma}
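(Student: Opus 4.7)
I would split the proof into two parts corresponding to the two assertions.

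For the minimum characterizations of $\delta$ and $\lambda$: each $F_r^i$ is a single crosscut of $\D$, hence a single arc on the boundary of the Jordan subdomain $D_i$ defined as the component of $\D \setminus F_r^i$ containing $0$. Taking a Riemann map $\phi_i: D_i \to \D$ with $\phi_i(0)=0$, the conformal invariance of $\omega$, $\lambda$, and $\delta$ reduces these three quantities for $F_r^i$ to the corresponding quantities on the single boundary arc $\phi_i(F_r^i) \subset \partial\D$ at the point $0$. By the rotational symmetry of $\D$ around $0$, each of these depends only on the angular measure of the arc, which equals $2\pi\omega_\D(0,F_r^i)$. As functions of this angular measure, $\omega$ is trivially strictly increasing while $\lambda$ and $\delta$ are strictly decreasing (a classical fact, consistent with (\ref{haex}) and (\ref{hare})). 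Hence the index $i=*$ that maximizes $\omega_\D(0,F_r^i)$ simultaneously minimizes both $\lambda_\D(0,F_r^i)$ and $\delta_\D(0,F_r^i)$.

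For the monotonicity in $r$: by the strict correspondence just established, it suffices to prove $r \mapsto \omega_\D(0,F_r^*)$ is decreasing. Let $D = f(\D)$ and fix $|f(0)| < r_1 < r_2$. Set $C^* := f(F_{r_2}^*)$ and let $B_2$ be the component of $D \setminus C^*$ containing $f(0)$, so that, by the proof of Lemma \ref{maxi}, $\omega_\D(0,F_{r_2}^*) = \omega_{B_2}(f(0),C^*)$. Applying Pommerenke's Proposition 2.13 (as in the proof of Lemma \ref{maxi}) to the circle $\{|z|=r_1\}$, the component $D_0^{r_1}$ of $D \setminus \{|z|=r_1\}$ containing $f(0)$ lies in $\{|z|<r_1\}$, so $C^* \not\subset \overline{D_0^{r_1}}$. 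Any continuous path in $D$ from $f(0)$ to a point of $C^*$ must therefore exit $D_0^{r_1}$ across some arc $\tilde{C}$ of $\{|z|=r_1\} \cap D$ lying on $\partial D_0^{r_1}$; this $\tilde{C}$ is a crosscut of $D$ separating $f(0)$ from $C^*$.

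Let $B_1$ be the component of $D \setminus \tilde{C}$ containing $f(0)$. Then $B_1 \subset B_2$ (a path from $f(0)$ avoiding $\tilde{C}$ cannot by the separation reach $C^*$), $C^* \subset \partial B_2 \setminus \partial B_1$, and $\partial B_1 \setminus \partial B_2 = \tilde{C}$ (the remaining boundary pieces of $B_1$ lie on $\partial D$, and since $B_1 \subset B_2$ they are contained in $\partial B_2$). Lemma \ref{markov} then yields
\[
\omega_\D(0,F_{r_2}^*) = \omega_{B_2}(f(0),C^*) \le \omega_{B_1}(f(0),\tilde{C}) = \omega_\D(0,F_{r_1}^{j}) \le \omega_\D(0,F_{r_1}^*),
\]
where $F_{r_1}^{j} := f^{-1}(\tilde{C})$ is a component of $F_{r_1}$. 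This establishes the monotonicity, and measurability follows.

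The main obstacle I expect is the topological bookkeeping underlying the Markov step---producing a single separating crosscut $\tilde{C}$ and verifying $\partial B_1 \setminus \partial B_2 = \tilde{C}$---given that the unbounded domain $D$ may be partitioned by $\{|z|=r_1\}$ into infinitely many pieces. The small-$r$ edge cases (where $F_r$ may be empty or a closed curve lying entirely in $D$) should be absorbed via the natural conventions $\omega_\D(0,F_r^*) = 0$ and $\lambda_\D(0,F_r^*) = \delta_\D(0,F_r^*) = +\infty$.
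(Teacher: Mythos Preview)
Your proposal is correct and follows essentially the same approach as the paper: reduce the minimum characterizations via Riemann maps to the monotonicity of $\omega$, $\lambda$, $\delta$ in a single arc-length parameter, and prove that $r\mapsto\omega_\D(0,F_r^*)$ is decreasing by applying Lemma~\ref{markov} across a component of $F_{r_1}$ separating $0$ from $F_{r_2}^*$. The paper is simply terser about the existence of this separating crosscut and repeats the Riemann-map reduction for $\lambda$ and $\delta$ at each fixed pair $r_1<r_2$, whereas you economize by invoking the arc-length correspondence already established in the first part.
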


Note that the extremal distances $\delta_{\D}(0,F_r^i)$ and $\lambda_{\D}(0,F_r^i)$ are considered with respect to the component of $\D \backslash F_r^i$ containing $0$. Here, a function $f\colon I\to \mathbb R$ on an interval $I\subset \mathbb R$ is decreasing if $r_1<r_2$ implies $f(r_1)\ge f(r_2)$.

\begin{proof} Fix an $r>0$. For every $i\in I_r$, by the Riemann mapping theorem there is a conformal mapping $g_i$ from the component of $\D\backslash F_r^i$ containing $0$ onto $\D$ so that $g_i(0)=0$ and $g_i(F_r^i)$ is the arc $(e^{-i\theta_i},e^{i\theta_i})$, where $\theta_i \in (0, \pi ]$. By the conformal invariance of harmonic measure, we infer that, for every $i\in I_r$,
\begin{equation}\label{harmonicde}
\omega_{\D} (0,F_r^i)=\omega_{\D}(0,g_i(F_r^i))=\frac{\theta_i}{\pi}. 
\end{equation}
Moreover, the conformal invariance of extremal distance implies that 
\[\lambda_{\D} (0,F_r^i)=\lambda_{\D} (0,g_i(F_r^i))=\tilde{\lambda}_{\D} ([-1,0],g_i(F_r^i)).\]
For the last equality, see \cite[p.\ 370]{Beu}. When $\theta_i$ increases, the monotonicity of extremal length  implies that $\lambda_{\D} (0,F_r^i)$ decreases. This in combination with (\ref{harmonicde}) and Lemma \ref{maxi} gives
\[\lambda_{\D} (0,F_r^*)=\min \{ \lambda_{\D} (0,F_r^i): i\in I_r\}.\]
Similarly,
\[\delta_{\D} (0,F_r^*)=\min \{ \delta_{\D} (0,F_r^i): i\in I_r\}.\]

Now, let $r_1<r_2$. By Lemma \ref{markov} and the definition of the component $F_r^*$ we have, respectively, that
\[\omega_{\D}(0,F_{r_2}^*)\le \omega_{\D}(0,F_{r_1}^k) \le \omega_{\D}(0,F_{r_1}^*), \]
where $F_{r_1}^k$ is a component of $F_{r_1}$ that separates $0$ from $F_{r_2}^*$. So, $\omega_{\D}(0,F_r^*)$ is a decreasing function of $r$.

For $i=1,2$, by the Riemann mapping theorem there is a conformal mapping $h_i$ from the component of $\D\backslash F_{r_i}^*$ containing $0$ onto $\D$ so that $h_i(0)=0$ and $h_i(F_{r_i}^*)$ is the arc $(e^{-i\phi_i},e^{i\phi_i})$, where $\phi_i \in (0, \pi ]$. By the conformal invariance and the monotonicity of the harmonic measure $\omega_{\D} (0,F_r^*)$ in $r$, we deduce that $\phi_1 \ge \phi_2$. This in conjunction with the conformal invariance and the monotonicity of extermal length, gives
 \[\lambda_{\D} (0,F_{r_1}^*)\le\lambda_{\D} (0,F_{r_2}^*) \]
and hence $e^{-\pi \lambda_{\D}(0,F_r^*) }$ is decreasing in $r>0$. Similarly, $e^{-\pi \delta_{\D}(0,F_r^*) }$ is decreasing in $r>0$.
\end{proof}

\section{Proofs of the main results}\label{pro}

Next, we prove Theorems \ref{main} and \ref{hardynumber} and some consequent results.

\begin{proof}[Proof of Theorem \ref{main}] First, we prove the equivalence of (1) and (2). Suppose that $f\in A_{\alpha}^p (\D)$ for some $p>0$ and $\alpha \ge -1$. By Theorem \ref{known} we have
\begin{equation}\label{newre}
\int_{0}^{+\infty} r^{p-1} \omega_{\D} (0,F_r)^{\alpha+2} dr<+\infty.
\end{equation}

Now, let $\Omega_r$ be the component of $\D \backslash F_r$ containing $0$ and let $\Omega_r^*$ be the component of $\D \backslash F_r^*$ containing $0$. We set $F_r^{c}=\partial \Omega_r \backslash F_r $ and $F_r^{*c}=\partial \Omega_r^* \backslash F_r^*$. See Fig. \ref{disks}. Since $\Omega_r\subset\Omega_r^*$ and $F_r^{c}\subset F_r^{*c}$, it follows that, for every $r>0$,
\[ 1-\omega_{\Omega_r} (0,F_r)=\omega_{\Omega_r} (0,F_r^c)\le \omega_{\Omega_r^*} (0,F_r^{*c})=1-\omega_{\Omega_r^*} (0,F_r^*) \]
i.e.,
\begin{equation}\label{harmonic}
\omega_{\D} (0,F_r^*)\le \omega_{\D} (0,F_r).
\end{equation}
Since $\omega_{\D} (0,F_r^*)$ is measurable, by  Lemma \ref{decreasing}, relations (\ref{newre}) and (\ref{harmonic}) imply that
\[\int_{0}^{+\infty} r^{p-1} \omega_{\D} (0,F_r^*)^{\alpha+2} dr<+\infty.\]

Conversely, let $F_r'$ be a component of $F_r$ such that $d_{\D}(0,F_r)=d_{\D}(0,F_r')$. By the Beurling-Nevanlinna projection theorem \cite[p.\ 10]{Pog} we infer that, for every $r>0$,
\begin{equation}\label{projection}
e^{-d_{\D} (0,F_r)}=e^{-d_{\D} (0,F_r')}\le \frac{\pi}{2}\omega_{\D} (0,F_r') \le \frac{\pi}{2}\omega_{\D} (0,F_r^*).
\end{equation}
Therefore, if, for some $p>0$ and $\alpha \ge -1$,
\[\int_{0}^{+\infty} r^{p-1} \omega_{\D} (0,F_r^*)^{\alpha+2} dr<+\infty,\]
then
\[\int_{0}^{+\infty} r^{p-1}e^{-(\alpha+2)d_{\D} (0,F_r)} dr<+\infty\]
and hence Theorem \ref{known} implies that $f\in A_{\alpha}^p(\D)$.

The equivalence of (2), (3) and (4) is immediate by (\ref{haex}) and (\ref{hare}), which give, respectively, that
\begin{equation}\label{sx11}
\frac{\pi}{8}\omega_{\D}(0,F_r^*)\le e^{-\pi \lambda_{\D}(0,F_r^*)}\le \omega_{\D}(0,F_r^*)
\end{equation}
and
\begin{equation}\label{sx22}
\omega_{\D}(0,F_r^*)\le e^{-\pi \delta_{\D}(0,F_r^*)}\le \frac{\pi}{2} \omega_{\D}(0,F_r^*).
\end{equation}

\begin{figure}
\begin{overpic}[width=0.8\textwidth]{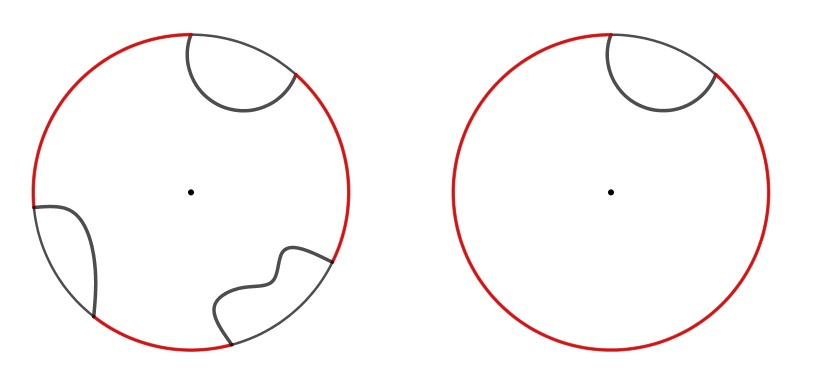}
\put  (15,10) {$F_r$}
\put  (70,35) {$F_r^*$}
\put  (22,20) {$0$}
\put  (74,20) {$0$}
\put  (1,35) {\color{red} $F_r^c$}
\put  (52,35) {\color{red}$F_r^{*c}$}
\end{overpic}
\caption{The sets $F_r,F_r^c,F_r^*$ and $F_r^{*c}$.}
\label{disks}
\end{figure}
\end{proof}

\begin{corollary}\label{l2}
	Let $f$ be a conformal mapping of $\D$. Let $\Phi (r)$ denote $\omega_{\D}(0,F_r^*)$, $e^{-\pi \delta_{\D} (0,F_r^*)}$ or $e^{-\pi\lambda_{\D}(0,F_r^*)}$. If $f\in A_\alpha^p(\D)$ for some $p>0$ and $\alpha\ge-1$, then there is a  constant $C>0$ such that
	\[	\Phi (r)\le Cr^{-\frac{p}{\alpha+2}},\]
	for every $r>0$. Moreover, if there are $p'>0$, $\alpha '\ge-1$, $C>0$, and $r_0>0$ such that 
	\[\Phi (r)\le Cr^{-\frac{p'}{\alpha'+2}}\]
	for every $r>r_0$, then $f\in A_\alpha^p(\D)$ for all $p>0$ and $\alpha\geq -1$ such that $\frac{p}{\alpha+2}\in (0,\frac{p'}{\alpha'+2})$. 
\end{corollary}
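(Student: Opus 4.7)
The plan is to extract both implications from Theorem \ref{main} combined with the monotonicity of $\Phi$ established in Lemma \ref{decreasing}. Since the three candidates for $\Phi(r)$ are pairwise comparable up to multiplicative constants thanks to the inequalities (\ref{sx11}) and (\ref{sx22}), it suffices to prove each direction for a single choice, say $\Phi(r)=\omega_{\D}(0,F_r^*)$, and transfer the result to the other two by absorbing constants into $C$. Moreover, each of the three functions is uniformly bounded in $r$, a fact we will use for the small-$r$ range.

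For the forward direction, assume $f\in A_\alpha^p(\D)$. By Theorem \ref{main} the integral $I:=\int_0^{+\infty}s^{p-1}\Phi(s)^{\alpha+2}ds$ is finite. Since $\Phi$ is decreasing in $r$, for every $r>0$ I would compare $\Phi(r)^{\alpha+2}$ to an average of the integrand over a shrinking window:
\[
\int_{r/2}^{r} s^{p-1}\Phi(s)^{\alpha+2}\,ds \;\ge\; \Phi(r)^{\alpha+2}\int_{r/2}^{r} s^{p-1}\,ds \;=\; \frac{1-2^{-p}}{p}\,r^{p}\,\Phi(r)^{\alpha+2}.
\]
Rearranging yields $\Phi(r)^{\alpha+2}\le \frac{p\,I}{1-2^{-p}}\,r^{-p}$, which gives the bound $\Phi(r)\le C\,r^{-p/(\alpha+2)}$ for large $r$. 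For the small-$r$ range, the same inequality follows by enlarging $C$, since $\Phi$ is bounded while $r^{-p/(\alpha+2)}$ blows up as $r\to 0^{+}$.

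For the converse, suppose $\Phi(r)\le C\,r^{-p'/(\alpha'+2)}$ for $r>r_0$, and let $p>0$, $\alpha\ge -1$ satisfy $p/(\alpha+2)\in(0,p'/(\alpha'+2))$. By Theorem \ref{main}, I only need to show $\int_0^{+\infty}r^{p-1}\Phi(r)^{\alpha+2}\,dr<+\infty$. Splitting at $r_0$: the integral over $(0,r_0)$ is finite because $\Phi$ is uniformly bounded and $\int_0^{r_0}r^{p-1}\,dr<+\infty$; the integral over $(r_0,+\infty)$ is dominated by
\[
C^{\alpha+2}\int_{r_0}^{+\infty} r^{\,p-1-p'(\alpha+2)/(\alpha'+2)}\,dr,
\]
which converges precisely under the assumed strict inequality $p/(\alpha+2)<p'/(\alpha'+2)$.

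No serious obstacle is anticipated here; the structure is standard for passing between integral convergence and pointwise decay of monotone functions. The only mild care needed is choosing the constant $C$ to absorb the comparison constants between the three versions of $\Phi$ and to cover the small-$r$ regime, both of which reduce to the uniform boundedness of $\Phi(r)$ inherited from $\omega_{\D}(0,F_r^*)\le 1$ together with (\ref{sx11}) and (\ref{sx22}).
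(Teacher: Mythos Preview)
Your proposal is correct and follows essentially the same approach as the paper: deduce pointwise decay from finiteness of the integral in Theorem \ref{main} via monotonicity of $\Phi$, and conversely feed the decay hypothesis back into Theorem \ref{main}. The only cosmetic difference is in the forward direction: the paper integrates over $[0,R]$, obtaining directly
\[
\int_{0}^{R} r^{p-1}\Phi(r)^{\alpha+2}\,dr \ge \frac{R^{p}}{p}\,\Phi(R)^{\alpha+2},
\]
which yields the bound $\Phi(R)\le C R^{-p/(\alpha+2)}$ for \emph{every} $R>0$ in one stroke, with the explicit constant $C=\bigl(p\int_{0}^{+\infty}r^{p-1}\Phi(r)^{\alpha+2}\,dr\bigr)^{1/(\alpha+2)}$. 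Your window $[r/2,r]$ also gives the bound for all $r>0$ (not just large $r$, so your separate small-$r$ patching is unnecessary), but the paper's choice is a touch cleaner. Your reduction to a single $\Phi$ via (\ref{sx11})--(\ref{sx22}) is valid but optional, since Theorem \ref{main} and Lemma \ref{decreasing} apply to each of the three functions directly.
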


\begin{proof}
If $f\in A_{\alpha}^p(\D)$ for some $p>0$ and $\alpha\ge -1$, by Theorem \ref{main} we have  
	\[\int_{0}^{+\infty}r^{p-1}\Phi (r)^{\alpha+2}dr<+\infty.
	\]
Lemma \ref{decreasing} implies that $\Phi (r)$ is decreasing in $r$ and thus, for $R>0$,
	\begin{align}
	\int_{0}^{+\infty}r^{p-1}\Phi (r)^{\alpha+2}dr \ge\int_{0}^{R}r^{p-1}\Phi (r)^{\alpha+2}dr \ge \frac{R^p}{p}\Phi (R)^{\alpha+2}. \nonumber
	\end{align}
	Combining the results above we infer that, for every $R>0$,
	\[\Phi (R)\le CR^{-\frac{p}{\alpha+2}},\]
	where 
	\[C=\left(p\int_{0}^{+\infty}r^{p-1}\Phi (r)^{\alpha+2}dr\right)^{\frac{1}{\alpha+2}}>0.\]
	
	Now, suppose there are $p'>0$, $\alpha '\geq -1$, $C>0$ and $r_0>0$ such that 
	\[\Phi (r)\le Cr^{-\frac{p'}{\alpha'+2}}\]
	for every $r>r_0$. If $\alpha\ge -1$ and $p>0$ satisfy $\frac{p}{\alpha+2}<\frac{p'}{\alpha'+2}$, then 
	\[\int_{r_0}^{+\infty}r^{p-1}\Phi (r)^{\alpha+2}dr\le C^{\alpha+2}\int_{r_0}^{+\infty}r^{p-1-\frac{p'}{\alpha'+2}(\alpha+2)}dr<+\infty.\]
So, by Theorem \ref{main} we deduce that $f\in A_\alpha^p(\D)$.
\end{proof}

\begin{proof}[Proof of Theorem \ref{hardynumber}] By (\ref{harmonic}) and (\ref{projection}) we have that, for every $r>0$,
\[ \frac{2}{\pi} e^{-d_{\D}(0,F_r)} \le \omega_{\D} (0,F_r^*)\le \omega_{\D} (0,F_r)\]
and thus
\[ \frac{\log \omega_{\D} (0,F_r)^{-1} }{\log r}\le \frac{\log \omega_{\D} (0,F_r^*)^{-1} }{\log r}\le \frac{\log (\pi/2)}{\log r}+ \frac{d_{\D} (0,F_r)}{\log r}.\]
This in combination with Theorem \ref{haknown} gives (1). Then equalities (2) and (3) follow directly by (\ref{sx11}) and (\ref{sx22}).
\end{proof}

\section{Counter-example}\label{exa}

The following example gives a negative answer to Question \ref{que}.

\begin{theorem}
There is a simply connected domain $D$ in $\C$ with the following property. Let $f$ be a conformal mapping from $\D$ onto $D$ with $f(0)=0$. Then $f\in A^p_{\alpha}(\D)$ for every $p>0$ and $\alpha \ge -1$ but 
\[\int_{0}^{+\infty} r^{p-1}e^{-\pi (\alpha+2) \delta_{\D} (0,F_r)} dr=+\infty\]
for every $p>0$ and $\alpha \ge -1$ that satisfy $p\ge \frac{\alpha }{2}+1$.
\end{theorem}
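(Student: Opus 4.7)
The plan is to construct $D$ as an unbounded simply connected ``tree-like'' domain: a bounded base $D_0\ni 0$, together with, at each dyadic scale $|w|\asymp 2^n$, approximately $N_n\asymp n$ disjoint narrow branches crossing the circle $\{|w|=2^n\}$. Each branch is attached through a chain of nested throats, all very narrow, with widths chosen so that $F_r$ has $N(r)\asymp \log r$ components while the maximal-component harmonic measure satisfies $\omega_\D(0,F_r^*)\le Cr^{-(\log r)/2}$.

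With such a $D$, the property $f\in A^p_\alpha(\D)$ for every $p>0$ and $\alpha\ge -1$ would follow directly from the second half of Corollary \ref{l2}: since $\omega_\D(0,F_r^*)$ decays faster than any polynomial in $r$, the decay hypothesis of that corollary is satisfied for every exponent. The decay bound itself would be proved by iterated application of Lemma \ref{harbeu} along the chain of narrow throats that any curve from $0$ must traverse before reaching a component of $F_r$: each successive throat contributes a multiplicative factor controlled by the Beurling projection estimate, and the cumulative product is faster than any polynomial provided the throat widths are chosen suitably.

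The heart of the matter---and the main obstacle---is the complementary upper bound
\[\delta_\D(0,F_r)\le \tfrac{1}{2\pi}\log r + O(1),\]
from which $e^{-\pi(\alpha+2)\delta_\D(0,F_r)}\ge c\,r^{-(\alpha+2)/2}$, and consequently
\[\int_0^{+\infty} r^{p-1}e^{-\pi(\alpha+2)\delta_\D(0,F_r)}\,dr\ge c\int_{r_0}^{+\infty} r^{p-1-(\alpha+2)/2}\,dr=+\infty\]
whenever $p-1-(\alpha+2)/2\ge -1$, i.e.\ $p\ge \alpha/2+1$. To obtain the $\delta$ estimate, the model case is the branched cover $z\mapsto z^{N}$ acting on the disk: for $N$ equally spaced arcs $F\subset\partial\D$ of equal harmonic measure $\omega$, a pull-back computation for the admissible metrics gives the identity $\delta_\D(0,F)=\delta_\D(0,F')/N$, where $F'=z^N(F)$ is a single arc of harmonic measure $N\omega$. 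Combining with the single-arc estimate $\delta_\D(0,F')\approx \frac{1}{\pi}\log(1/(N\omega))$ and plugging $N(r)\asymp \log r$, $\omega_\D(0,F_r)\asymp r^{-(\log r)/2}$ yields the claimed logarithmic bound. For the non-symmetric tree that actually arises, this identity must be replaced by an inequality obtained by constructing an explicit admissible metric on $\D\setminus B_\varepsilon$ that distributes flux from $\partial B_\varepsilon$ through each throat out to the corresponding component of $F_r$ with approximately equal current; adapting the symmetric-cover estimate to the irregular tree geometry is the technically delicate step, but once it is done the divergence assertion for $p\ge \alpha/2+1$ follows immediately.
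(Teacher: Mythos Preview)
Your proposal is a plan rather than a proof, and the plan has a structural tension that you do not resolve. You want narrow throats so that iterated Beurling estimates force $\omega_\D(0,F_r^*)$ to decay super-polynomially; but those same narrow throats increase the extremal distance $\tilde\lambda_{\Omega_r\setminus B_\varepsilon}(\partial B_\varepsilon,F_r)$, since every curve from $\partial B_\varepsilon$ to $F_r$ must pass through them. Concretely, if at scale $2^k$ there are $N_k\asymp k$ branches, each with a throat of Euclidean width $w_k$ and length $\asymp 2^k$, the serial--parallel rule gives an extremal-length contribution $\asymp 2^k/(N_k w_k)$ at that scale. Summing to level $n\asymp\log_2 r$ shows that $\delta_\D(0,F_r)\lesssim \tfrac{1}{2\pi}\log r$ forces $\sum_k 2^k/(k\,w_k)\lesssim \log r$, i.e.\ the throats cannot be ``very narrow'' at all---their angular widths must stay bounded below. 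Your branched-cover heuristic $\delta_\D(0,F)=\delta_\D(0,F')/N$ is correct for $N$ symmetric arcs on $\partial\D$, but it does not transfer to the image of $F_r$ under the Riemann map of $\Omega_r$, which is highly asymmetric for a tree with throats; the ``technically delicate step'' you defer is exactly where the tension above reappears, and you give no indication of how to overcome it.

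The paper sidesteps this tension completely by taking $D$ to be $\C$ minus an increasing family of \emph{radial} rays (finitely many new rays inserted at radii $\alpha_n=e^{4n\pi}$ so that the sectors bisect at each stage). Two observations then finish the argument in a few lines. First, $D$ is starlike at $0$, so Hansen's formula $h(D)=\lim_{r\to\infty}\pi/\alpha_D(r)$ applies; since the maximal sector aperture $\alpha_D(r)\to 0$, one gets $h(D)=+\infty$ and hence $f\in H^p(\D)\subset A^p_\alpha(\D)$ for all $p>0$, $\alpha\ge -1$, with no need for Corollary~\ref{l2} or any harmonic-measure computation. Second---and this is the key geometric point your construction lacks---radial slits are invisible to the radial extremal metric $\rho(z)=1/|z|$: the slit annulus $D_r\setminus B_\varepsilon$ still contains almost every radial segment, so
\[
\tilde\lambda_{D_r\setminus B_\varepsilon}(\partial B_\varepsilon,f(F_r))=\tfrac{1}{2\pi}\log(r/\varepsilon)
\]
exactly, while the extension rule gives $\tilde\lambda_{D_r\setminus B_\varepsilon}(\partial B_\varepsilon,\partial D_r)\ge\tfrac{1}{2\pi}\log(1/\varepsilon)$. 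Subtracting yields $\delta_\D(0,F_r)\le\tfrac{1}{2\pi}\log r$ for all $r>1$, and the divergence for $p\ge\alpha/2+1$ follows immediately. The moral is that the right example is not a tree with throats but a slit domain: radial slits shrink the sectors (forcing $h(D)=\infty$) without obstructing radial curves (keeping $\delta_\D(0,F_r)$ as small as in the full annulus).
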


\begin{proof} If $\alpha_n={e^{4n\pi }}$ for every $n\ge 1$, let $D$ be the simply connected domain of Fig. \ref{t2}, namely,
\[D=\mathbb{C} \backslash \bigcup\limits_{k = 0}^3 {\left[ {{e^{i\frac{{k\pi }}{2}}}, + \infty } \right)} \backslash \bigcup\limits_{l = 1}^{ + \infty } {\bigcup\limits_{k = 0}^{{2^{l + 1}} - 1} {\left[ {{\alpha _l}{e^{i\frac{\pi }{{{2^l}}}\left( {\frac{1 }{2} + k} \right)}}, + \infty } \right)} }\]
with the notation $\left[ {r{e^{i\theta }}, + \infty } \right) = \left\{ {s{e^{i\theta }}:s \ge r} \right\}$. By the Riemann mapping theorem there is a conformal mapping $f$ from $\D$ onto $D$ such that $f(0)=0$. For $r>0$, let
\[\alpha_D(r)=\max \{\theta(E):E {\rm{\,\,is\,\, a\,\, subarc\,\, of\,\,}} D\cap\{z\in \C:|z|=r\}\},\]
where $\theta(E)$ denotes the angular Lebesgue measure of $E$. Since $D$ is a starlike domain with respect to $0$, the Hardy number of $D$ is given by
\[h(D)=\lim_{r\to +\infty}\frac{\pi}{\alpha_D(r)}=+\infty.\] 
See \cite[p.\ 237]{Han}. Thus, $f\in H^p(\D)$ for every $p>0$ and, since $H^p(\D) \subset A^p_{\alpha}(\D)$, we infer that $f\in A^p_{\alpha}(\D)$ for every $p>0$ and $\alpha>-1$. 

\begin{figure}
		\begin{center}
			\includegraphics[scale=0.6]{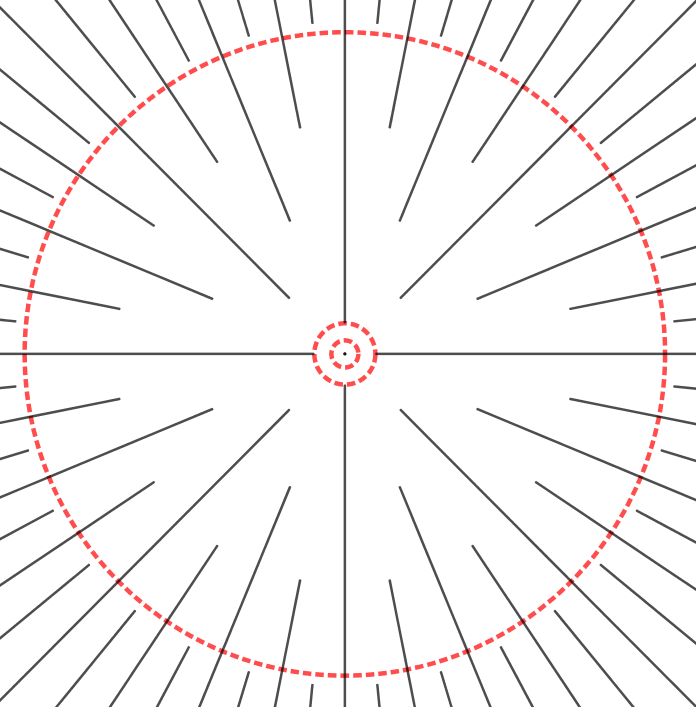}
			\caption{The domain $D$ and the circles $\partial B_{\varepsilon}, \partial \D$ and $\overline {f(F_r)}$.}
			\label{t2}
		\end{center}
\end{figure}

Now, for $\varepsilon \in (0,1)$, we set $B_{\varepsilon}=\{z\in \C: |z|<\varepsilon \}$. Also, let $D_r=D\cap \{z\in \C:|z|<r\}$. The conformal invariance of reduced extremal distance implies that, for $r>1$,
\begin{equation}\label{s1}
\delta_{\D}(0,F_r)=\delta_{D_r}(0,f(F_{r}))=\lim_{\varepsilon \to 0} (\tilde{\lambda}_{D_r\backslash B_{\varepsilon}}(\partial B_{\varepsilon}, f(F_r) )-\tilde{\lambda}_{D_r\backslash B_{\varepsilon}}(\partial B_{\varepsilon}, \partial D_r )).
\end{equation}
See Fig. \ref{t2}. We know that \cite[p.\ 142]{Gar}
\begin{equation}\label{s2}
\tilde{\lambda}_{D_r\backslash B_{\varepsilon}}(\partial B_{\varepsilon}, f(F_r) )=\frac{1}{2\pi}\log\frac{r}{\varepsilon}
\end{equation}
and if $\Delta=\{z\in \C: \varepsilon<|z|<1\}$, then kby the extension rule of extremal distance \cite[p.\ 134]{Gar} we have
\begin{equation}\label{s3}
\tilde{\lambda}_{D_r\backslash B_{\varepsilon}}(\partial B_{\varepsilon}, \partial D_r )\ge \tilde{\lambda}_{\Delta} (\partial B_{\varepsilon}, \partial \D) =\frac{1}{2\pi}\log\frac{1}{\varepsilon}.
\end{equation}
By (\ref{s1}), (\ref{s2}) and (\ref{s3}) we infer that, for every $r>1$,
\[\delta_{\D}(0,F_r)\le \frac{1}{2\pi} \lim_{\varepsilon \to 0}(\log\frac{r}{\varepsilon}-\log\frac{1}{\varepsilon})= \frac{1}{2\pi}\log r.\]
Therefore, it follows that, if $p\ge \frac{\alpha }{2}+1$, then
\begin{align}
\int_1^{+\infty}r^{p-1}e^{-\pi (\alpha+2)\delta_{\D}(0,F_{r})}dr \ge \int_1^{+\infty}r^{p-1-\frac{\alpha+2}{2}}dr=+\infty.  \nonumber
\end{align}
So, 
\[\int_{0}^{+\infty} r^{p-1}e^{-\pi (\alpha+2) \delta_{\D} (0,F_r)} dr=+\infty\]
for every $p>0$ and $\alpha \ge -1$ that satisfy $p\ge \frac{\alpha }{2}+1$.
\end{proof}

\begin{bibdiv}
	\begin{biblist}
	
		\bib{Bet}{article}{
			title={Harmonic measure on simply connected domains of fixed inradius},
			author={D. Betsakos},
			journal={Ark. Mat.},
			volume={36},
			date={1998},
			number={2},
			pages={275--306}
		}
		
		\bib{Beu}{book}{
			title={The Collected Works of Arne Beurling},
			subtitle={Vol. 1, Complex Analysis},
			author={A. Beurling},
			date={1989},
			publisher={Birkh\"{a}user},
			address={Boston}
		}
		
		\bib{Dur}{book}{
			title={Theory of $H^p$ Spaces},
			author={P. Duren},
			date={1970},
			publisher={Pure and Applied Mathematics, vol. 38, Academic Press},
			address={New York-London}
		}
		
		\bib{DurS}{book}{
			title={Bergman Spaces},
			author={P. Duren and A. Schuster},
			date={2004},
			publisher={American Mathematical Society},
			address={Providence, RI}
		}

		\bib{Gar}{book}{
			title={Harmonic Measure},
			author={J.B. Garnett and D.E. Marshall},
			date={2005},
			publisher={New Mathematical Monographs, vol. 2, Cambridge University Press},
			address={Cambridge}
		}
		\bib{Han}{article}{
			title={Hardy classes and ranges of functions},
			author={L.J. Hansen},
			journal={Michigan Math. J.},
			volume={17},
			date={1970},
			number={3},
			pages={235--248}
		}
	
\bib{Karams}{article}{
	title={Hyperbolic metric and membership of
		conformal maps in the Hardy space},
	author={C. Karafyllia},
	journal={Proc. Amer. Math. Soc.}
	volume={147},
	date={2019},
	number={9},
	pages={3855-3858}
}
		
		\bib{Kararkiv}{article}{
			title={On the Hardy number of a domain in terms of harmonic measure and hyperbolic distance},
			author={C. Karafyllia},
			journal={Ark. Mat.},
			volume={58},
			date={2020},
			number={2},
			pages={307--331}
			
		}
		
		\bib{Karindiana}{article}{
	title={On a relation between harmonic measure and hyperbolic distance on planar domains},
	author={C. Karafyllia},
	journal={Indiana Univ. Math. J.}
	volume={69},
	date={2020},
	number={5},
	pages={1785–1814}
}		
		\bib{Karcanadian}{article}{
	title={Hyperbolic metric and membership of
		conformal maps in the Bergman space},
	author={D. Betsakos and C. Karafyllia and N. Karamanlis},
	journal={Canad. Math. Bul.}
	volume={64},
	date={2021},
	number={1},
	pages={174-181}
}
		
		\bib{Karjournal}{article}{
			title={Geometric characterizations for conformal mappings in weighted Bergman spaces},
			author={C. Karafyllia and N. Karamanlis},
			journal={J. Anal. Math. (to appear), ArXiv Preprint: 2109.10752
			},
			volume={},
			date={},
			pages={}
		}

				\bib{Kabergman}{article}{
			title={The Bergman number of a plane domain},
			author={C. Karafyllia},
			journal={Submitted, ArXiv Preprint: 2210.12190
			},
			volume={},
			date={},
			pages={}
		}
		
		\bib{Kim}{article}{
			title={Hardy spaces and unbounded quasidisks},
			author={Y.C. Kim and T. Sugawa},
			journal={Ann. Acad. Sci. Fenn.},
			volume={36},
			date={2011},
			number={1},
			pages={291--300}
		}
		
\bib{Pog}{article}{
	title={Geometric models, iteration and composition operators},
	author={P. Poggi-Corradini},
	journal={Ph.D. Thesis, University of Washington},
	date={1996}
}
		\bib{Cor}{article}{
	title={The Hardy Class of Geometric Models and the Essential Spectral Radius of Composition Operators},
	author={P. Poggi-Corradini},
	journal={ J. Funct. Anal.}
	volume={143},
	date={1997},
	number={1},
	pages={129-156}
}
\bib{Pom}{book}{
	title={Boundary Behaviour of Conformal Maps},
	author={C. Pommerenke},
	date={1992},
	publisher={Grundlehren der Mathematischen Wissenschaften, vol. 299, Springer-Verlag},
	address={Berlin}
}

	\end{biblist}
\end{bibdiv}

\end{document}